\documentclass[12pt]{article}
\usepackage[T1]{fontenc}
\usepackage{lmodern,amsmath,amsthm,amsfonts,amssymb,graphicx,float,microtype,thmtools,underscore,mathtools,xurl,multirow}
\usepackage[usenames,dvipsnames,svgnames,table]{xcolor}
\usepackage[shortlabels]{enumitem}
\setlist[itemize]{topsep=0ex,itemsep=0ex,parsep=0ex}
\setlist[enumerate]{topsep=0ex,itemsep=0ex,parsep=0ex}
\usepackage{pgf,tikz,ifthen,arrayjobx}
\usetikzlibrary{arrows}
\usetikzlibrary{calc}
\usepackage[unicode=true]{hyperref}
\hypersetup{
colorlinks,
breaklinks=true,
linkcolor={blue!60!black},
citecolor={black},
urlcolor={blue!60!black},
pdftitle={Induced Subgraph Bounds on the Zero Forcing Number and Chromatic Consequences}}
\usepackage[capitalise, compress, nameinlink, noabbrev]{cleveref}
\crefname{lem}{Lemma}{Lemmas}
\crefname{thm}{Theorem}{Theorems}
\crefname{prop}{Proposition}{Propositions}
\crefname{cor}{Corollary}{Corollaries}

\usepackage[longnamesfirst,numbers,sort&compress]{natbib}
\makeatletter
\def\NAT@spacechar{~}
\makeatother
\usepackage[margin=30mm]{geometry}
\renewcommand{\baselinestretch}{1.15}
\allowdisplaybreaks

\renewcommand{\epsilon}{\varepsilon}
\renewcommand{\emptyset}{\varnothing}
\renewcommand{\ge}{\geqslant}
\renewcommand{\le}{\leqslant}
\renewcommand{\geq}{\geqslant}
\renewcommand{\leq}{\leqslant}

\DeclareMathOperator{\tw}{tw}

\renewcommand{\thefootnote}{\fnsymbol{footnote}}
\theoremstyle{plain}
\newtheorem{thm}{Theorem}

\newtheorem{cor}[thm]{Corollary}
\newtheorem{prop}[thm]{Proposition}

\crefname{obs}{Observation}{Observations}
\newtheorem*{lem*}{Lemma}
\theoremstyle{definition}

\newtheorem*{conj*}{Conjecture}
\newtheorem{remark}{Remark}

\theoremstyle{problem}

\usepackage{thmtools}
\usepackage{thm-restate}

\begin{document}
\title{\bf\boldmath\fontsize{18pt}{18pt}\selectfont
Induced Subgraph Bounds on the Zero Forcing Number
and Chromatic Consequences}

\author{%
Dickson Y. B. Annor\,\footnotemark[5] \qquad
Ben Howerton \,\footnotemark[2] \qquad
}

\date{}

\maketitle

\begin{abstract}
Let $G$ be a graph with chromatic number $\chi(G)$, clique number $\omega(G)$ and zero forcing number $Z(G)$. We establish  new lower bounds on $Z(G)$ in terms of induced triangle-free subgraphs. In particular, we show that if a graph $G$ contains an induced triangle-free subgraph $H$ with minimum degree $\delta(H) \ge 3$, then $Z(G)\ge\delta(H)+1$.
As consequences, we prove that every triangle-free graph
satisfies $\chi(G)\le\max\{3,Z(G)\}$ and obtain an application to
planar graphs. Moreover, we prove that
$\chi(G)\le \frac{Z(G)}{2}+2$ for every triangle-free graph. 


\end{abstract}

\textbf{Keywords:} Chromatic number, clique number, zero forcing number, induced subgraphs, triangle-free graphs.

\textbf{2020 Mathematics Subject Classification:} 05C15, 05C50, 05C69.

\footnotetext[5]{Department of Mathematical and Physical Sciences, La Trobe University, Bendigo, Australia (\texttt{\ d.annor@latrobe.edu.au}). 
}
\footnotetext[2]{\texttt{\ bhowerton@pointdynamics.com}.
}

\renewcommand{\thefootnote}{\arabic{footnote}}


\section{Introduction}
In this paper, we deal with finite undirected and simple graphs.

The chromatic number of a graph is one of the most fundamental parameters in graph theory. Determining the chromatic number of a graph is NP-hard, and consequently a large body of research has focused on bounding it in terms of other graph invariants. Classical examples include Brooks' theorem \cite{brooks1941colouring}, which bounds the chromatic number by the maximum degree, and the trivial lower bound given by the clique number.

The zero forcing number, introduced by the AIM Minimum Rank Special Graphs Work Group \cite{aim2008zero}, is a graph parameter arising from a colour-propagation process. Originally developed to study the minimum rank problem for graphs, zero forcing has since found applications in linear algebra, control theory, network monitoring, and graph searching.

Let $G$ be a graph with all vertices initially coloured either black or white. If $u$ is a black vertex of $G$ and $u$ has exactly one neighbour that is white, say $v$, then we
change the colour of $v$ to black. This rule is called the \emph{colour-change rule}. In this case
we say that $u$ forces $v$ and  write $u \to v$. The \emph{zero forcing number} of $G$, $Z(G)$, is the minimum size of an initial set of black vertices that can force the remaining white vertices of $G$ to become black by repeatedly applying the colour-change rule. 

Since both graph colouring and zero forcing involve propagation constraints on vertex colourings, it is natural to ask whether the two parameters are related. In a notable result, Taklimi \cite{taklimi2013zero} proved the following, which provides a surprisingly direct connection between a colouring parameter and a forcing parameter. 

\begin{thm}[\cite{taklimi2013zero}]\label{thm:takl}
 Any graph $G$ satisfies  
 $\chi(G) \leq Z(G)+1$.
\end{thm}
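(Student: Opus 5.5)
We prove $\chi(G)\le Z(G)+1$ by showing that $G$ is $Z(G)$-degenerate, i.e.\ every subgraph of $G$ has a vertex of degree at most $Z(G)$. Greedy colouring along a degeneracy order then uses at most $Z(G)+1$ colours. The argument combines two known facts about zero forcing: the minimum degree bound $\delta(G)\le Z(G)$, and monotonicity of $Z$ under taking subgraphs in a suitable sense.

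\emph{Step 1: $\delta(G)\le Z(G)$ for every graph $G$ with at least one edge.} Let $S$ be a minimum zero forcing set and consider the first force $u\to v$. At that moment only the vertices of $S$ are black. All neighbours of $u$ other than $v$ are black, so they lie in $S$, and $u\in S$ is not its own neighbour. Hence $|S|\ge (\deg(u)-1)+1=\deg(u)\ge\delta(G)$. If no force occurs at all, then $S=V(G)$ and $Z(G)=|V(G)|>\delta(G)$.

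\emph{Step 2: $Z(H)\le Z(G)$ for an induced subgraph $H$?} This inequality fails in general. Deleting a vertex can increase $Z$, although only by at most one; for example, $Z(K_{1,n})=n-1$ while the forest $nK_1$ has $Z=n$. So we should not use plain monotonicity. Instead we take the smallest $k$ such that $G$ has an induced subgraph $H$ with $\delta(H)\ge k$; equivalently, $k$ is the degeneracy of $G$, and some subgraph $H$ with $\delta(H)=k$ exists. It then suffices to show $k\le Z(G)$. This is the main obstacle: we must compare the degrees inside $H$ with forcing in all of $G$.

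\emph{Plan for Step 2.} Take a minimum zero forcing set $S$ of $G$ and run the forcing process. Consider the first moment a vertex $u$ of $H$ performs a force, or becomes black while having white neighbours in $H$. A cleaner route uses the identity that a zero forcing set yields a path cover: the forcing chains partition $V(G)$ into $Z(G)$ induced paths. Order the vertices by the time they turn black, with vertices of $S$ at time $0$, and take the first vertex $w$ of $H$ that forces, say $w\to x$. Every other $H$-neighbour of $w$ is already black at that time. Any black vertex of $H$ that has not yet forced is the current last vertex of its own chain, and chains are disjoint, so there are at most $Z(G)$ such vertices. Since no vertex of $H$ has forced before $w$, all black $H$-neighbours of $w$, together with $w$ itself, are current chain ends. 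This gives $(\deg_H(w)-1)+1\le Z(G)$, hence $k\le\deg_H(w)\le Z(G)$.

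If no vertex of $H$ ever forces, then every vertex of $H$ ends as a chain end, so $|V(H)|\le Z(G)$ and $k<|V(H)|\le Z(G)$. Finally, apply this argument to every subgraph $H'$ of $G$ rather than only the densest one. Each such $H'$ then has a vertex of degree at most $Z(G)$, so $G$ is $Z(G)$-degenerate, and greedy colouring gives $\chi(G)\le Z(G)+1$.
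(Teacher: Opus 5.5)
Your proof is correct and follows the route the paper attributes to Taklimi. You bound the degeneracy by $Z(G)$ using the first vertex of the subgraph to force and the fact that black vertices which have not yet forced are the current ends of distinct forcing chains (the same counting as Steps 0--2 of the paper's proof of Theorem~\ref{thm:induceddk}), then colour greedily. One slip: in Step 2 you want the \emph{largest} $k$ such that some induced subgraph has $\delta(H)\ge k$, not the smallest.
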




The main contribution of this paper is the establishment of new lower bounds for the zero forcing number that arise from induced subgraphs. Taklimi's proof of $\chi(G)\le Z(G)+1$ is based on the observation that $Z(G)$ is at least the degeneracy of $G$. We show that, under additional structural assumptions, one can improve this lower bound by one unit.

Our main results are the following.

\begin{thm}\label{thm:induceddk}
Let $G$ be a graph, and let $H$ be an induced triangle-free subgraph of $G$ with
$\delta(H)\ge 3$. Then
\[
Z(G)\ \ge\ \delta(H)+1 .
\]
\end{thm}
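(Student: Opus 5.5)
Let $k=\delta(H)\ge 3$, and suppose for contradiction that $S$ is a zero forcing set of $G$ with $|S|\le k$. The plan is to study the forcing process at the first moment it touches $V(H)$ and show that $H$ can never be fully coloured. Fix a chronological list of forces $u_1\to v_1, u_2\to v_2,\dots$ that turns all of $G$ black. First we check that $V(H)\not\subseteq S$: otherwise $|S|\ge |V(H)|\ge k+1$, since $H$ has minimum degree $k$. So some vertex of $H$ is forced during the process.

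Consider the first time $t$ at which a vertex $u=u_t\in V(H)$ forces. This includes the possibility that $u$ forces a vertex outside $H$. Just before this force, every other neighbour of $u$ is black. In particular, at least $\deg_H(u)-1\ge k-1$ of the $H$-neighbours of $u$ are black, and $u$ itself is black.

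Now count the black vertices of $H$ at time $t$. No vertex of $H$ has forced yet, but vertices of $H$ may already have been forced by vertices outside $H$. This is the main obstacle: black vertices of $H$ need not lie in $S$.

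To handle it, I would use the standard fact that forcing chains partition $V(G)$ into exactly $|S|$ chains. Each chain begins at a vertex of $S$, and each vertex performs at most one force. Since $u$ and at least $k-1$ of its $H$-neighbours are black, they form a set $B$ of at least $k$ black vertices of $H$, and no vertex of $H$ has forced yet. Therefore each vertex of $B$ is currently the last vertex of its own chain, so $B$ lies in $|B|\ge k$ distinct chains. Hence $|S|\ge k$, and thus $|S|=k$ and $|B|=k$ exactly.

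In this equality case, at time $t$ every chain ends at a vertex of $B$. After the force $u\to w$, the terminal vertices are the $k-1$ black $H$-neighbours $x_1,\dots,x_{k-1}$ of $u$, together with $w$.

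Next consider which vertex forces next among these terminals; only terminal vertices can ever force again. Each $x_i$ has at least $k-1$ further neighbours in $H$ besides $u$. By triangle-freeness, none of these is another $x_j$ or $u$, so they are distinct from the current black set in $H$, except possibly for $w$ if $w\in V(H)$.

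Suppose $w\notin V(H)$. Then each $x_i$ has at least $k-1\ge 2$ white neighbours in $H$, so it can never force. Indeed, only $w$'s chain can extend, and forces along $w$'s chain that enter $H$ make each entered vertex a new terminal replacing the previous one. I would formalise this with an invariant: at any time, the terminal vertices lying in $H$ each have at least two white $H$-neighbours, or the set of black $H$-vertices is small relative to the chain count. I would try first to prove that the number of black vertices of $H$ never exceeds $k$ until some terminal in $H$ forces, and that such a force needs $k-1$ black $H$-neighbours of that terminal.

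For the case $w\in V(H)$, the black set of $H$ is $\{u,w,x_1,\dots,x_{k-1}\}$, and $w$ is adjacent to $u$ but, by triangle-freeness, to no $x_i$. Then each $x_i$ still has at least $k-1\ge 2$ white $H$-neighbours, because its $H$-neighbours other than $u$ avoid $\{u,w,x_1,\dots,x_{k-1}\}$. Also $w$ has at least $k-1\ge2$ white $H$-neighbours. So no vertex of $H$ can force, and the forces at terminals outside $H$ do not exist since all terminals are in $H$. The process halts, which is a contradiction.

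Iterating this argument, with triangle-freeness and $\delta(H)\ge3$ ensuring at least two white neighbours throughout, gives the contradiction in general. The delicate part is making the iteration in the case $w\notin V(H)$ rigorous, where $w$'s chain may wander outside $H$ and re-enter it.
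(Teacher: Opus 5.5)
\paragraph{The gap.} You leave the case $w\notin V(H)$ unresolved and only sketch an unfinished invariant argument for $w$'s chain leaving and re-entering $H$. That case cannot occur, and you already have the tool to see it. You undercounted. Just before $u\to w$, $w$ is the \emph{only} white neighbour of $u$. So if $w\notin V(H)$, all $\deg_H(u)\ge k$ of $u$'s $H$-neighbours are black. Together with $u$, that gives at least $k+1$ black vertices of $H$, none of which has forced yet. By your own chain argument they are terminals of pairwise distinct chains, so $|S|\ge k+1$, a contradiction. Equivalently, your equality $|B|=k$ forces $\deg_H(u)=k$ and $w\in N_H(u)$.

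This is exactly how the paper disposes of it in its Step 2. There it shows that the first $H$-vertex to force must force into $H$, has $H$-degree exactly $k$, and that the black $H$-vertices at that moment are exactly $N_H[u]\setminus\{w\}$, one per chain.

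\paragraph{A smaller slip at the start.} The same counting repairs your opening step. $V(H)\not\subseteq S$ only shows that some $H$-vertex is \emph{forced}. To get a first $H$-vertex that \emph{forces}, argue as follows: otherwise every vertex of $H$ ends as a chain terminal, so $|V(H)|\le|S|\le k$, a contradiction.

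\paragraph{Comparison with the paper.} With these fixes your argument is complete, and your finish for $w\in V(H)$ is more direct than the paper's. After reaching the same configuration, the paper sets $W=V(H)\setminus N_H[v]$ and follows the first vertex of $W$ to turn black. It shows that vertex's forcer lies outside $H$, then derives a contradiction from the vertex's position on its chain (Steps 3--5). You instead use that there are always exactly $|S|=k$ black not-yet-forced vertices:
\begin{itemize}
\item right after $u\to w$, these are $w,x_1,\dots,x_{k-1}$, all in $H$;
\item each has at least $k-1\ge 2$ white $H$-neighbours, by triangle-freeness;
\item vertices that have already forced have no white neighbours.
\end{itemize}
So no force is possible while white vertices remain, and the process stalls immediately.
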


Theorem~\ref{thm:induceddk} can be viewed as an induced-subgraph version of the Davila--Kenter \cite{davila2014bounds} bound for triangle-free graphs and is best possible. As a consequence, 
we prove the following. 

\begin{cor}\label{cor:tfchi}
If $G$ is a triangle-free graph, then $\chi(G)\le\max\{3,\,Z(G)\}$. In particular every
triangle-free graph $G$ with $Z(G) \ge3$ satisfies $\chi(G)\le Z(G)$, improving
Theorem~\ref{thm:takl} by one in this range.
\end{cor}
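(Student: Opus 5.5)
We derive the corollary from Theorem~\ref{thm:induceddk} together with a degeneracy argument. Let $G$ be triangle-free and set $k=\max\{3,Z(G)\}$. The plan is to show that $G$ is $(k-1)$-degenerate: every subgraph of $G$ has a vertex of degree at most $k-1$. Greedy colouring along a degeneracy ordering then gives $\chi(G)\le k$. For this, induced subgraphs suffice. If some subgraph had minimum degree at least $k$, then the subgraph it induces on the same vertex set would also have minimum degree at least $k$.

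So suppose, for contradiction, that $G$ has an induced subgraph $H$ with $\delta(H)\ge k$. Since $k\ge 3$, we get $\delta(H)\ge 3$. Because $G$ is triangle-free, $H$ is an induced triangle-free subgraph of $G$. Theorem~\ref{thm:induceddk} then gives
\[
Z(G)\ge \delta(H)+1\ge k+1\ge Z(G)+1,
\]
which is a contradiction. Hence every induced subgraph of $G$, and so every subgraph, has a vertex of degree at most $k-1$. Ordering the vertices by repeatedly removing a vertex of minimum degree, and colouring greedily in reverse order, uses at most $k$ colours. Therefore $\chi(G)\le \max\{3,Z(G)\}$.

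The second sentence follows at once. If $Z(G)\ge 3$, then $\max\{3,Z(G)\}=Z(G)$, so $\chi(G)\le Z(G)$, which is one better than Theorem~\ref{thm:takl}. There is no real obstacle here, since all the work is in Theorem~\ref{thm:induceddk}. The only points needing care are that induced subgraphs of a triangle-free graph are triangle-free, and that the threshold $k\ge 3$ is exactly what makes the hypothesis $\delta(H)\ge 3$ of the theorem available. This is why the bound has the form $\max\{3,\cdot\}$.
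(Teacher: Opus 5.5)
Your proposal is correct and is essentially the paper's argument: apply Theorem~\ref{thm:induceddk} to an induced subgraph of large minimum degree to bound the degeneracy, then colour greedily. The paper phrases it directly as a case split on the degeneracy $d$ ($d\le 2$ gives $\chi(G)\le 3$; $d\ge 3$ gives $\chi(G)\le d+1\le Z(G)$) rather than by contradiction with $k=\max\{3,Z(G)\}$, but the content is the same.
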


Moreover, we prove that the following stronger inequality
also holds 
for
triangle-free graphs.

\begin{thm}\label{thm:freezfn}
Every triangle-free graph $G$ satisfies
\[
\chi(G) \leq \frac{Z(G)}{2}+2.
\]
\end{thm}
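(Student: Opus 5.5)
The plan is to reduce Theorem~\ref{thm:freezfn} to an induced-subgraph version of the girth-four case of the Davila--Kenter bound, in the same way that Corollary~\ref{cor:tfchi} follows from Theorem~\ref{thm:induceddk}. Let $k=\chi(G)$. If $k\le 2$ there is nothing to prove, since $Z(G)\ge 1$ gives $Z(G)/2+2>2$. So assume $k\ge 3$. Choose a minimal induced subgraph $H$ of $G$ with $\chi(H)=k$, i.e.\ a vertex-critical induced subgraph. Then $H$ is triangle-free and $\delta(H)\ge k-1$. It therefore suffices to prove
\[
Z(G)\ \ge\ 2\delta(H)-2 \qquad\text{for every induced triangle-free subgraph } H \text{ of } G ,
\]
because then $Z(G)\ge 2(k-1)-2=2k-4$, which rearranges to $\chi(G)\le Z(G)/2+2$.

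To prove this inequality, fix a minimum zero forcing set $S$ of $G$ and a chronological list of forces, and use the standard fact that $|S|$ equals the number of maximal forcing chains. Let $u\to v$ be the first force performed by a vertex $u\in V(H)$, and let $w\to x$ be the first force performed by a vertex $w\in V(H)\setminus\{u\}$. Just before $u$ forces, all neighbours of $u$ except $v$ are black, so at least $\delta(H)-1$ vertices of $N_H(u)$, together with $u$, are black. Just before $w$ forces, all of $N_H(w)$ except $x$ is black. Since $H$ is triangle-free, $N_H(u)$ is independent. Moreover $N_H(u)$ and $N_H(w)$ share at most one vertex when $u\sim w$, and otherwise we use the Davila--Kenter count: the black vertices of $N_H(u)\cup N_H(w)\cup\{u,w\}$ number at least $2\delta(H)-2$ after accounting for $v$, $x$ and the common neighbours. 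The key step is to argue that these black vertices of $H$ lie on pairwise distinct forcing chains. Up to this moment, no vertex of $H$ other than $u$ has forced, so the only chain that can contain two of these vertices via an $H$-to-$H$ force is the one through $u\to v$. This yields $2\delta(H)-2$ distinct chains, and hence $|S|\ge 2\delta(H)-2$.

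The main obstacle is exactly this distinctness claim, because forcing chains can leave $H$ and come back. A chain $a\to y\to b$ with $a,b\in V(H)$ and $y\notin V(H)$ is impossible before the first $H$-force, since $a$ itself would have to force. But a chain could enter $H$ twice through outside vertices, as in $y_1\to a$, $a\to\cdots$, or through different outside vertices on one chain passing through $H$. I would handle this by strengthening the choice of events. Instead of the first forces made by $H$-vertices, I would consider, for each black vertex of $H$ at the relevant time, the earliest time its chain \emph{enters} $V(H)$, and show that a single chain enters $V(H)$ at most once before any $H$-vertex forces. The reason is that re-entry requires the chain to pass from an $H$-vertex to an outside vertex, which is itself a force by an $H$-vertex. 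With that established, counting chains reduces to counting black $H$-vertices, which is the computation above; the case analysis for $u\sim w$ versus $u\not\sim w$ follows Davila and Kenter's triangle-free argument.
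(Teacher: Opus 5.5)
\textbf{There is a genuine gap.} The reduction lemma you aim to prove, $Z(G)\ge 2\delta(H)-2$ for every induced triangle-free subgraph $H$ of $G$, is false, so no refinement of the chain counting can establish it.

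Here is a counterexample. Let $H=K_{k,k}$ with parts $\{x_1,\dots,x_k\}$ and $\{y_1,\dots,y_k\}$. Let $G$ be obtained by adding new vertices $q_2,\dots,q_{k-1}$, where $q_i$ is adjacent exactly to $x_i$ and $x_{i+1}$. Then $H$ is induced (indeed $G$ is bipartite) and $\delta(H)=k$. The set $\{x_1,x_2,y_1,\dots,y_{k-1}\}$ is a zero forcing set, via
\[
x_1\to y_k,\qquad x_2\to q_2\to x_3\to q_3\to\cdots\to q_{k-1}\to x_k .
\]
Hence $Z(G)\le k+1<2k-2$ for $k\ge4$; by Theorem~\ref{thm:induceddk}, in fact $Z(G)=k+1$.

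The failure occurs exactly in the case you pass over as ``the Davila--Kenter count''. Here the first $H$-forcer is $u=x_1$ and the second is $w=x_2$. They are non-adjacent with $N_H(u)=N_H(w)$, and $w$ forces a vertex outside $H$. The black $H$-vertices at that moment number only $\delta(H)+2$, which yields just $Z(G)\ge\delta(H)+1$.

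Distinctness of chains was not the real obstacle. Apart from $u$'s chain, which may carry two black $H$-vertices, it holds as you argue. The real obstacle is different. In the whole-graph argument, the black set before the second force is exactly $S\cup\{v\}$. In the induced setting, vertices outside $H$ can keep forcing between $H$-forces, and chains can leave $H$ and re-enter it. So the $H$-vertices never need many black neighbours at once.

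Your passage through the minimum degree of a vertex-critical subgraph therefore loses too much. The first step is fine: choose $H$ critical with $\delta(H)\ge\chi(G)-1$. But zero forcing controls minimum degrees of induced subgraphs only at rate $\delta(H)+1$, not $2\delta(H)-2$.

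What you need is a parameter that is monotone under taking subgraphs, is bounded above by $Z$, and bounds $\chi$ of triangle-free graphs at rate one half. The paper uses exactly this. It combines $\tw(G)\le Z(G)$ (Barioli et al.) with the Dvo\v{r}\'ak--Kawarabayashi bound $\chi(G)\le\lceil(\tw(G)+3)/2\rceil$ for triangle-free graphs, giving $\chi(G)\le\lfloor Z(G)/2\rfloor+2$.
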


The family of odd cycles shows that the bound in Theorem~\ref{thm:freezfn} is sharp. Moreover, Theorem~\ref{thm:freezfn} is at least as strong as Theorem~\ref{thm:takl} for every nonempty triangle-free graph, and it is frequently substantially stronger within this family. 





An earlier version of this paper proposed the inequality
\begin{equation}\label{eq:former-bound}
\chi(G)\le
\left\lceil\frac{\omega(G)+Z(G)+1}{2}\right\rceil .
\end{equation}
After that version appeared, a counterexample on $13$ vertices was
communicated to us. This prompted a targeted exact search, which found the
$12$-vertex counterexample described in Subsection~\ref{sec:verification}.
Together with our exhaustive census through $11$ vertices, it establishes
that $12$ is the minimum possible order of a counterexample.

The induced-subgraph theorems and their triangle-free consequences are
independent of \eqref{eq:former-bound}. We therefore refocus the paper on
those results. We retain the exhaustive computation through eleven vertices
as an exact finite result, not as evidence for a universal conjecture.










The following results are well-known.
\begin{thm}[\cite{diestel2024graph}]\label{thm:chiclique}
Any graph $G$ satisfies $\chi(G) \geq \omega(G)$.
\end{thm}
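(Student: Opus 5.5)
The statement immediately above is Theorem~\ref{thm:chiclique}, the classical fact that $\chi(G)\ge\omega(G)$. I would prove it directly from the definitions, with no earlier results needed.

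Let $K$ be a clique of $G$ with $|K|=\omega(G)$, and let $c\colon V(G)\to\{1,\dots,\chi(G)\}$ be a proper colouring using $\chi(G)$ colours. Any two distinct vertices of $K$ are adjacent, so $c$ must give them different colours. Hence the restriction of $c$ to $K$ is injective. An injective map from $K$ into a set of $\chi(G)$ colours forces $|K|\le\chi(G)$, that is, $\omega(G)\le\chi(G)$.

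Equivalently, a proper colouring of $G$ restricts to a proper colouring of the induced subgraph $G[K]\cong K_{\omega(G)}$. The complete graph $K_n$ clearly needs $n$ colours, so $\chi(G)\ge\chi(K_{\omega(G)})=\omega(G)$.

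There is no real obstacle here. The only point to state explicitly is that proper colourings are inherited by subgraphs, so chromatic number is monotone under taking subgraphs. The argument also covers the edgeless case, where $\omega(G)=1\le\chi(G)$ for any nonempty $G$, and the null graph, where both parameters are $0$ by convention.
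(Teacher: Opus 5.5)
Your argument is correct: any proper colouring is injective on a maximum clique, so $\omega(G)\le\chi(G)$. The paper gives no proof of its own and simply cites this as a standard fact from Diestel; your argument is the usual textbook one.
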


\begin{thm}[\cite{brooks1941colouring}]\label{thm:brooks}
If $G$ is a connected graph with maximum degree $\Delta(G)$,
then $\chi(G)\le\Delta(G)$ unless $G$ is a complete graph or an odd cycle.
\end{thm}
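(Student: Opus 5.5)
We would follow Lovász's greedy-ordering proof of Theorem~\ref{thm:brooks}. Write $\Delta=\Delta(G)$. If $\Delta\le 2$, then $G$ is a path or a cycle. Paths and even cycles are $2$-colourable. Odd cycles are excluded, and $K_1,K_2$ are complete. So we assume $\Delta\ge 3$ and $G\neq K_{\Delta+1}$. The basic tool is the greedy colouring: if the vertices are ordered so that each vertex has at most $\Delta-1$ neighbours earlier in the order, greedy uses at most $\Delta$ colours. We also need a refinement in which the last vertex has $\Delta$ earlier neighbours but two of them receive the same colour.

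\emph{Step 1 (non-regular case).} Suppose $G$ has a vertex $r$ with $\deg(r)<\Delta$. Take a spanning tree by breadth-first search from $r$. Order the vertices by non-increasing distance from $r$, so that $r$ comes last. Every vertex other than $r$ has its tree parent later in the order, hence at most $\Delta-1$ earlier neighbours. The vertex $r$ has fewer than $\Delta$ neighbours in total. Greedy therefore uses at most $\Delta$ colours. So we may assume $G$ is $\Delta$-regular.

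\emph{Step 2 (cut vertices).} If $G$ has a cut vertex $x$, write $G=G_1\cup G_2$ with $G_1\cap G_2=\{x\}$ and each $G_i$ connected. In each $G_i$ the vertex $x$ has degree less than $\Delta$, so Step~1 colours each $G_i$ with $\Delta$ colours. Permuting colours in $G_2$ so that the two colourings agree on $x$ gives a $\Delta$-colouring of $G$. Hence we may assume $G$ is $2$-connected, $\Delta$-regular with $\Delta\ge 3$, and not complete.

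\emph{Step 3 (the key triple).} We seek a vertex $v$ with two nonadjacent neighbours $u,w$ such that $G-\{u,w\}$ is connected. Given such a triple, colour $u$ and $w$ with colour $1$. Order the remaining vertices by non-increasing distance from $v$ in $G-\{u,w\}$, with $v$ last, and colour greedily. Every vertex other than $v$ has a later neighbour, so it sees at most $\Delta-1$ coloured neighbours when it is coloured. The vertex $v$ sees $\Delta$ coloured neighbours, but two of them ($u$ and $w$) share colour $1$, so a colour remains for $v$.

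Finding the triple is the main obstacle, and we split into two cases.
\begin{itemize}
\item If $G$ is $3$-connected: since $G$ is connected and not complete, there are vertices $u,w$ at distance exactly $2$ with a common neighbour $v$. Then $G-\{u,w\}$ is connected by $3$-connectivity.
\item If $G$ is $2$-connected but not $3$-connected: pick $v$ such that $G-v$ has a cut vertex. Since $G-v$ is connected but not $2$-connected, its block tree has at least two leaf blocks $B_1,B_2$. By $2$-connectivity of $G$, the vertex $v$ has a neighbour in each leaf block that is not the cut vertex of that block. Choose such neighbours $u\in B_1$ and $w\in B_2$. These are nonadjacent because they lie in different blocks and are not cut vertices. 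Removing them leaves $G-v-u-w$ connected, since removing a non-cut vertex from a leaf block keeps that block connected; this requires $|B_i|\ge 3$. The vertex $v$ is still joined to the rest because $\deg(v)=\Delta\ge 3$.
\end{itemize}
The delicate points are the degenerate possibility $|B_i|=2$ and ensuring $v$ retains a neighbour after deleting $u$ and $w$. These are checked using $\Delta\ge 3$ and regularity: every vertex has degree at least $3$ in $G$ and loses at most one neighbour (namely $v$) in $G-v$, so a leaf block cannot be a single edge.
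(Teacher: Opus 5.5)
The paper does not prove this statement. It quotes Brooks' theorem from Brooks (1941) and uses it only as a black box, in Proposition~\ref{prop:averagebdd} and in the final case analysis of Theorem~\ref{thm:allregular}. So there is no proof in the paper to compare routes with.

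Your proposal gives a complete, self-contained proof along the lines of Lov\'asz's short greedy-ordering argument, and it is correct. The breadth-first ordering settles the non-regular case. Gluing at a cut vertex reduces to the $2$-connected $\Delta$-regular case; applying Step~1 to each $G_i$ is legitimate because that argument only needs all degrees to be at most $\Delta(G)$, not $\Delta(G_i)$. Finally, the triple $(v,u,w)$, with $u,w$ sharing a colour, leaves a free colour for $v$ when it is coloured last.

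Two points could be tightened.
\begin{enumerate}
\item The existence of $v$ such that $G-v$ has a cut vertex deserves a sentence. Since $G$ is $\Delta$-regular and not complete, $|V(G)|\ge\Delta+2\ge5$. Failure of $3$-connectivity then yields a separating pair $\{v,z\}$, and $z$ is a cut vertex of $G-v$.
\item Connectivity of $G-v-u-w$ does not actually require $|B_i|\ge3$. If a leaf block were a single edge, its non-cut vertex would be pendant in $G-v$, and deleting it would keep the graph connected. Your regularity argument excluding such blocks is correct, just unnecessary.
\end{enumerate}

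The paper's citation costs nothing in space. Your argument makes the paper's one use of a classical colouring theorem fully elementary and self-contained.
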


From Theorems~\ref{thm:takl} and~\ref{thm:brooks}, we can easily deduce the following.

\begin{prop}\label{prop:averagebdd}
 For any graph $G$ with maximum degree $\Delta(G)$ and zero forcing number $Z(G)$, it holds that
 \begin{equation*}
\chi(G) \leq  \left \lceil \frac{\Delta(G)+Z(G)+1}{2}\right\rceil
 \end{equation*}
and this bound is tight.
\end{prop}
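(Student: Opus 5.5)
The plan is to average the two available upper bounds on $\chi(G)$ and then handle the exceptional cases of Brooks' theorem separately. Theorem~\ref{thm:takl} gives $\chi(G)\le Z(G)+1$ for every graph. Theorem~\ref{thm:brooks} gives $\chi(G)\le \Delta(G)$ unless an exception occurs.

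First reduce to connected graphs. The chromatic number is the maximum over components, and for every component $C$ we have $\Delta(C)\le\Delta(G)$ and $Z(C)\le Z(G)$, since $Z$ is additive over components. So it suffices to prove the bound for a component attaining $\chi(G)$.

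For connected $G$ that is neither complete nor an odd cycle, Brooks gives $\chi\le\Delta$. Adding this to $\chi\le Z+1$ yields $2\chi\le \Delta+Z+1$. Since $\chi$ is an integer, $\chi\le\lceil(\Delta+Z+1)/2\rceil$.

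The exceptional cases are checked directly.
\begin{itemize}
\item If $G=K_n$, then $\Delta=n-1$, $Z=n-1$ and $\chi=n$. The right-hand side is $\lceil (2n-1)/2\rceil=n$, so equality holds. This also proves tightness.
\item If $G$ is an odd cycle, then $\Delta=2$, $Z=2$ and $\chi=3$. The right-hand side is $\lceil 5/2\rceil=3$, again with equality.
\end{itemize}

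There is a small subtlety in the reduction. The component attaining $\chi(G)$ may be a complete graph or an odd cycle even when $G$ itself is not. The exception check above uses that component's own $\Delta$ and $Z$, and these are at most $\Delta(G)$ and $Z(G)$, so the bound for $G$ still holds because the right-hand side is monotone in both parameters.

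The only facts needed are $Z(K_n)=n-1$ and $Z(C_n)=2$, which are standard. Hence there is no real obstacle. Tightness is witnessed by $K_n$, for every $n$, and by the odd cycles.
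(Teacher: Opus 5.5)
Your proof is correct and follows the paper's route, which combines Taklimi's bound $\chi(G)\le Z(G)+1$ with Brooks' theorem. The paper leaves the reduction to components and the Brooks exceptions implicit, and you handle both properly; your claim $Z(K_n)=n-1$ fails only for $n=1$, where $Z(K_1)=1$, and this is harmless. In fact the exceptional cases need no separate check, since $\chi\le\Delta+1$ together with $\chi\le Z+1$ already gives $\chi\le\lfloor(\Delta+Z+2)/2\rfloor=\lceil(\Delta+Z+1)/2\rceil$.
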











The following results will be useful in our work.



\begin{thm}[\cite{gentner2016extremal}]\label{thm:tri-free}
Let $G$ be a triangle-free graph with minimum degree $\delta \geq 2$. Then $Z(G) \geq 2\delta(G) -2$.
\end{thm}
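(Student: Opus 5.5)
The plan is to look at the first two forces in a chronological list of forces for a minimum zero forcing set $S$, and to use triangle-freeness to show that the closed neighbourhoods certifying these two forces contribute roughly $\delta$ black vertices each, with overlap at most two. Fix a minimum zero forcing set $S$ of $G$ and a chronological list of forces. Since $\delta\ge 2$, no vertex can be forced without a force being applied, so there are at least two forces (unless $S=V(G)$, which is trivial).

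\emph{First force.} Let the first force be $u\to v$. At that moment the black set is exactly $S$, and every neighbour of $u$ other than $v$ is black. Hence $N[u]\setminus\{v\}\subseteq S$, which already gives $|S|\ge \delta$.

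\emph{Second force.} Let the second force be $w\to x$. Then $N[w]\setminus\{x\}\subseteq S\cup\{v\}$. Note that $x\notin N[u]$, because $N[u]\setminus\{v\}$ is already black and $v$ is black by now. I will then split into cases according to the position of $w$.
\begin{itemize}
\item If $w=v$, then the vertices in $N(v)\setminus\{u,x\}$ are not adjacent to $u$, since otherwise they would form a triangle with $u$ and $v$. Hence these at least $\delta-2$ vertices lie in $S\setminus N[u]$, and $|S|\ge \delta+(\delta-2)$.
\item If $w\in N(u)\setminus\{v\}$, then $w$ and $u$ have no common neighbour, and $v\notin N(w)$ because $uvw$ would be a triangle. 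So $N(w)\setminus\{u,x\}$ has at least $\delta-2$ vertices, all in $S\setminus N[u]$, and the same count follows.
\end{itemize}

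\emph{The remaining case $w\notin N[u]\cup\{v\}$.} I expect this to be the main obstacle. Here $w\in S$, but $w$ may share many neighbours with $u$, as in $K_{\delta,\delta}$, so the naive union bound on $N[u]\cup N[w]$ fails. What I would try first is to exploit the freedom in choosing the forcing order: among all chronological lists for $S$, choose one minimising the number of forces before a force whose forcing vertex lies in $N[u]\cup\{v\}$. I would then argue that such a force, say $y\to z$, occurs early enough that $N[y]\setminus\{z\}$ is still essentially contained in $S$. Applying the triangle-free disjointness argument from the two cases above to $y$ then gives $\delta-2$ vertices of $S$ outside $N[u]$.

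As a fallback, I would replace $u$ by the forcing vertex of an edge $w$-to-common-neighbour. The point is that if $|N(u)\cap N(w)|$ is large, then $N(u)\cup N(w)$ is itself large because each side has its own vertices outside the common part. I would use the reversal of forcing chains, whose terminal vertices also form a zero forcing set of the same size, to rebalance the count. Combining the cases yields $Z(G)=|S|\ge 2\delta-2$.
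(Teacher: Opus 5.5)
The paper only quotes this statement from Gentner et al.\ and gives no proof, so your argument has to stand on its own. It does not yet. Your cases $w=v$ and $w\in N(u)\setminus\{v\}$ are correct. The case $w\notin N[u]$, however, is the entire difficulty, and for it you only list things you would try. (A minor slip: if $|V(G)\setminus S|=1$ there is just one force. That case follows from $|V(G)|\ge 2\delta$, since adjacent vertices of a triangle-free graph have disjoint neighbourhoods.)

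This case cannot be decided by looking at the first two forces. Take $K_{\delta+1,\delta+1}$ minus a perfect matching, with $a_ib_j$ an edge exactly when $i\neq j$ ($0\le i,j\le\delta$), and let $S=\{a_0,\dots,a_{\delta-2},b_2,\dots,b_\delta\}$. Then $a_0\to b_1$, $a_1\to b_0$, $b_\delta\to a_{\delta-1}$, $b_{\delta-1}\to a_\delta$ is a valid chronological list. Its second forcer $a_1$ lies outside $N[a_0]$. The two certificates $N[a_0]\setminus\{b_1\}$ and $N[a_1]\setminus\{b_0\}$ together contain only $\delta+1$ vertices, which is fewer than $2\delta-2$ once $\delta\ge 4$. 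Your count would work if non-adjacent vertices shared at most one neighbour (girth at least five), but triangle-freeness allows $N(u)$ and $N(w)$ to share $\delta-1$ vertices.

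Neither suggested repair closes this.

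\emph{First repair.} Reordering cannot move a force ahead of its prerequisites, and a force by a vertex of $N(u)$ may never occur at all. Moreover, when the first $y\in N(u)$ forces $z$, the vertices of $N(y)\setminus\{u,z\}$ (all at distance two from $u$) may have been forced rather than lie in $S$. So nothing makes $N[y]\setminus\{z\}$ ``essentially contained in $S$''. The useful bookkeeping is not membership in $S$ but forcing chains: at every moment, the black vertices that have not yet forced lie on distinct chains, so there are exactly $|S|$ of them. Even with this device, neighbours of $y$ that have already performed their own force drop out of the count. Controlling them is the real content of the theorem, and your sketch does not address it.

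\emph{Second repair.} This rests on a false premise. A large $|N(u)\cap N(w)|$ does not make $|N(u)\cup N(w)|$ large: in the example above, $|N(a_0)\cup N(a_1)|=\delta+1$. Invoking reversed chains to ``rebalance'' is not an argument.

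As written, the proposal shows $|S|\ge 2\delta-2$ only when the chosen chronological list has its second forcer in $N[u]$.
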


\begin{thm}[\cite{grotzch1958theorie}]\label{thm:3col}
 Every triangle-free planar graph can be coloured with three colours.
\end{thm}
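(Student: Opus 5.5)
We will not reprove this classical result in the paper; Theorem~\ref{thm:3col} is used only as a black box in the planar application. If we had to prove it, we would follow the precolouring-extension strategy of Thomassen, in the streamlined form later given by Dvořák, Kawarabayashi and Thomas. A bare minimal-counterexample argument does not suffice, because the extra hypothesis is what keeps the induction going.

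\textbf{Strengthened statement.} Let $G$ be a plane triangle-free graph whose outer face is bounded by a cycle $C$ with $|C|\le 5$. Then every proper $3$-colouring of $C$ extends to a proper $3$-colouring of $G$. Grötzsch's theorem follows directly. Either $G$ contains a $4$- or $5$-cycle, which we make the outer face and colour arbitrarily. Or $G$ has girth at least $6$, in which case Euler's formula gives a vertex of degree at most $2$, and we finish by induction.

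\textbf{Plan.} We take a counterexample $G$ minimising $|V(G)|+|E(G)|$ and derive structure in the following order.
\begin{enumerate}
\item $G$ is $2$-connected and has no vertex of degree at most $2$ off $C$; such a vertex could be deleted and coloured last.
\item $G$ has no separating cycle of length at most $5$. Given one, we colour its exterior by minimality, then extend inside using minimality with that cycle as the new outer face.
\item $G$ has no interior $4$-face $v_1v_2v_3v_4$. Given one, we identify $v_1$ with $v_3$. This stays planar and smaller. It stays triangle-free because any new triangle would come from a path $v_1xyv_3$, and together with $v_2$ or $v_4$ this yields a separating cycle of length at most $5$, which step~2 forbids. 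A colouring of the identified graph lifts back to $G$. Faces touching $C$ need a little extra care.
\end{enumerate}
After these steps, every interior face has length at least $5$, interior vertices have degree at least $3$, and short separating cycles are absent.

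\textbf{Main obstacle.} The final step is the discharging or reducible-configuration argument. Euler's formula alone gives no contradiction, since the dodecahedron satisfies all the local conditions above. So we must exploit the precoloured outer cycle. Following Thomassen, we would find a path of $5$-faces, or a vertex of degree $3$, near $C$. We would then delete or contract a short configuration adjacent to $C$ and enlarge the precoloured part, checking that the new outer boundary still has length at most $5$ and that no triangle is created. The delicate part is the case analysis showing that such a configuration always exists next to $C$, using chords and the structure of $2$-paths to $C$. That is where we expect essentially all the work to lie, and it is the reason we cite \cite{grotzch1958theorie} rather than reprove it.
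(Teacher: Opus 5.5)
This matches the paper, which gives no proof of Theorem~\ref{thm:3col}: it cites Gr\"otzsch~\cite{grotzch1958theorie} and uses the result as a black box in the proof of Theorem~\ref{thm:omegazfnplanar}, so citing it is the right call. Your sketch is orientation rather than a proof. A non-facial $4$- or $5$-cycle cannot simply be ``made the outer face''; you must split $G$ into its closed interior and its closed exterior, re-embedding the exterior with that cycle as outer face. More seriously, the final step as you describe it is unlikely to close, because enlarging the precoloured region next to $C$ generally pushes the boundary past length $5$. That is why Thomassen strengthens the inductive hypothesis (longer precoloured boundaries with characterised exceptions, or list assignments); the alternative is to reduce a configuration away from $C$, so that $C$ is never touched.
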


The paper is organised as follows. Section~\ref{sec:induced} develops new
induced lower bounds for the zero forcing number and derives several
consequences. Section~\ref{sec:omegachiz} records chromatic applications for
triangle-free and planar graphs, reports the exhaustive computation through
eleven vertices, and gives a minimum-order counterexample to
\eqref{eq:former-bound}.

\section{Induced lower bounds for the zero forcing number}\label{sec:induced}


In this section, we establish new lower bounds for the zero forcing number arising from induced subgraphs.

Taklimi's proof of Theorem~\ref{thm:takl} goes through the degeneracy: she shows that
\begin{equation}\label{eq:taklimi-deg}
Z(G)\ \ge\ \max\{\,\delta(H) : H \text{ an induced subgraph of } G\,\}
\end{equation}
(the right-hand side is the degeneracy $d(G)$), and combines it with the greedy
bound $\chi\le d(G)+1$ \cite{taklimi2013zero}.

We show that the presence of an induced triangle-free subgraph with minimum degree at least three forces a stronger bound. 




\begin{proof}[Proof of Theorem~\ref{thm:induceddk}]

Let $k=\delta(H)\ge3$, and suppose for a contradiction that $Z:=Z(G)=k$ (by
\eqref{eq:taklimi-deg}, $Z\ge k$). Fix a minimum zero forcing set $B$, $|B|=k$,
and a chronological list of forces, serialised so that exactly one force occurs
at each step $1,2,\dots$; for a vertex $z$ let $t(z)$ be the step at which $z$
becomes black ($t(z)=0$ for $z\in B$). The forces partition $V(G)$ into $Z$
forcing chains \cite{barioli2010zero}: induced paths $v_1\to v_2\to\cdots$,
where $v_1\in B$ and $v_i$ forces $v_{i+1}$. Times increase strictly along each
chain; each vertex forces at most once, and a vertex that has forced has no white
neighbours afterwards. Note $|V(H)|\ge k+1$.

\emph{Step 0: some vertex of $H$ performs a force.} Otherwise every vertex of
$H$ is the final vertex of its chain, so $|V(H)|\le Z=k<k+1$, a contradiction.

Let $v$ be the chronologically first vertex of $H$ to perform a force, at step
$t^\ast$, forcing a vertex $x$.

\emph{Step 1: before step $t^\ast$, each chain carries at most one black vertex
of $H$.} If $a,b\in V(H)$ lie on one chain with $t(a)<t(b)<t^\ast$, then $a$ is
not the final vertex of its chain (b follows it), so $a$ forces its
chain-successor $s$ with $t(s)\le t(b)<t^\ast$; so $a\in V(H)$ performs a force
before $v$ does, a contradiction. Moreover no vertex of $H$ other than $v$ on
$v$'s chain is black before $t^\ast$: a vertex before $v$ would force before
step $t^\ast$ as above, and every vertex after $v$ has time $\ge t^\ast$.

\emph{Step 2: $x\in V(H)$, $d_H(v)=k$, and the set of vertices of $H$ black
before step $t^\ast$ is exactly $N_H[v]\setminus\{x\}$, with precisely one on
each of the $k$ chains.}
At step $t^\ast$, $v$ forces $x$, so every neighbour of $v$ other than $x$
is black before $t^\ast$. If $x\notin V(H)$, then $\{v\}\cup N_H(v)$ consists
of $1+d_H(v)\ge k+1$ black vertices of $H$ lying (Step 1) on pairwise distinct
chains, so $Z\ge k+1$, a contradiction; hence $x\in V(H)$. Now
$\{v\}\cup\bigl(N_H(v)\setminus\{x\}\bigr)$ consists of $d_H(v)$ black vertices
of $H$ on distinct chains, so $d_H(v)\le Z=k$, whence $d_H(v)=k$; and any
further black vertex of $H$ would occupy a $(k+1)$-st chain. So the black
vertices of $H$ are $\{v,y_1,\dots,y_{k-1}\}$ where
$N_H(v)=\{x,y_1,\dots,y_{k-1}\}$, one on each chain.

\emph{Step 3: white reservoirs.} Since $H$ is triangle-free and
$x,y_1,\dots,y_{k-1}$ are common neighbours of $v$, the set
$\{x,y_1,\dots,y_{k-1}\}$ is independent in $H$. Let
$W=V(H)\setminus N_H[v]$; every vertex of $W$ is white at step $t^\ast$. Each
$y_i$ has $d_H(y_i)\ge k$ neighbours in $H$; removing $v$ and using
independence, $y_i$ has at least $k-1\ge2$ neighbours in $W$. Likewise $x$ has
at least $k-1\ge2$ neighbours in $W$. In particular $W\neq\emptyset$.

\emph{Step 4: no vertex of $H$ forces until a vertex of $W$ is black.} Since $B$
is a zero forcing set, every vertex of $W$ becomes black eventually; let $w$ be
the first, at step $s>t^\ast$, forced by some $u$ ($w\notin B$ since $w$ is white
at $t^\ast$). The vertices of $H$ black before step $s$ are exactly
$\{v,y_1,\dots,y_{k-1},x\}$. Suppose some $h\in V(H)$ performs a force at a step
$\sigma\in(t^\ast,s]$; then $h\in\{v,y_1,\dots,y_{k-1},x\}$. Not $h=v$: $v$ has
already forced. If $h=y_i$ (or $h=x$): when $h$ forces, all neighbours of $h$
except the forced vertex are black; but $h$ has at least $k-1\ge 2$ neighbours
in $W$, so at least $k-2\ge1$ of them, distinct from the forced vertex, would
have to be black before step $s$ --- impossible, as no vertex of $W$ is black
before step $s$. So no vertex of $H$ forces at any step in $(t^\ast,s]$; in
particular $u\notin V(H)$.

\emph{Step 5: contradiction.} Consider the chain $C$ containing $w$. By Step 2,
$C$ carries exactly one vertex $\tau\in\{v,y_1,\dots,y_{k-1}\}$ black before
$t^\ast$, and $\tau$ is the first vertex of $H$ on $C$ (an earlier one would be
black before $t(\tau)\le t^\ast$, contradicting Step 2). Since
$t(w)=s>t^\ast\ge t(\tau)$, $w$ lies after $\tau$ on $C$.

First suppose $C$ is $v$'s chain, so $\tau=v$ and the successor of $v$ on $C$ is
$x$, and $w$ lies after $x$. Let $\sigma_1$ be the vertex immediately after $x$
on $C$ (possibly $\sigma_1=w$). Then $x$ forces $\sigma_1$ at step
$t(\sigma_1)\le t(w)=s$, and $t(\sigma_1)>t^\ast$ since at step $t^\ast$ the
unique force is $v\to x$. So $x\in V(H)$ performs a force in $(t^\ast,s]$,
contradicting Step 4.

Otherwise $\tau=y_i$ for some $i$. Let $\sigma_1$ be the vertex immediately
after $\tau$ on $C$ (it exists, as $w$ lies after $\tau$; possibly
$\sigma_1=w$). Then $y_i$ forces $\sigma_1$ at step $t(\sigma_1)\le s$; and
$t(\sigma_1)>t^\ast$, for otherwise $y_i\in V(H)$ performed a force before $v$
(or $\sigma_1=x$, which lies on $v$'s chain, not $C$).
So $y_i$ performs a force in $(t^\ast,s]$, contradicting Step 4.

In either case we have a contradiction, so $Z(G)\ge k+1$. This completes the proof.
\end{proof}

\begin{remark}\label{rem:sharp}
Theorem~\ref{thm:induceddk} is sharp in all respects.
\begin{enumerate}
    \item The conclusion cannot be improved to $Z(G)\ge\delta(H)+2$: $K_{3,3}$ (as
$H=G$) has $\delta=3$ and $Z=4$. Thus, in contrast with the bound
$Z(G)\ge2\delta(G)-2$ of \cite{gentner2016extremal}, only one unit survives the
    passage to induced subgraphs.
\item Triangle-freeness cannot be dropped since the octahedron $K_{2,2,2}$ has
$\delta=4$, $\omega=3$ (so no $K_\delta$, even) and $Z=4$.
\item $\delta(H)\ge3$ is needed as cycles have $\delta=2=Z$.
\end{enumerate}

\end{remark}


\begin{proof}[Proof of Corollary~\ref{cor:tfchi}]
Let $d$ be the degeneracy of $G$, and $H$ an induced subgraph with
$\delta(H)=d$; $H$ is triangle-free. If $d\le2$, then $\chi(G)\le d+1\le3$. If
$d\ge3$, Theorem~\ref{thm:induceddk} gives $Z(G)\ge d+1$, so
$\chi(G)\le d+1\le Z(G)$.
\end{proof}



\begin{thm}\label{thm:master}
Let $H$ be an induced subgraph of $G$ with $\delta(H)=k\ge2$ such that, for
every vertex $v$ of $H$ with $d_H(v)=k$, every $u\in N_H(v)$ has at least two
neighbours in $V(H)\setminus N_H[v]$. Then $Z(G)\ge k+1$.
\end{thm}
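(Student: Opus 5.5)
The plan is to rerun the proof of Theorem~\ref{thm:induceddk} almost verbatim. Triangle-freeness and $k\ge3$ were used there only in Step~3, to show that each vertex of $N_H(v)$ has at least two neighbours in $W=V(H)\setminus N_H[v]$. This is exactly what the new hypothesis supplies for vertices $v$ with $d_H(v)=k$.

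By \eqref{eq:taklimi-deg} we have $Z(G)\ge k$. Suppose for a contradiction that $Z(G)=k$. Fix a minimum zero forcing set $B$, a serialised chronological list of forces, and the resulting $k$ forcing chains. Since $|V(H)|\ge k+1$, Step~0 carries over: some vertex of $H$ forces. Let $v$ be the first vertex of $H$ to force, say $v\to x$ at step $t^\ast$. Steps~1 and~2 used only the chain structure and $\delta(H)=k$, so they go through unchanged. They give $x\in V(H)$ and $d_H(v)=k$. They also show that the vertices of $H$ black before $t^\ast$ are exactly $N_H[v]\setminus\{x\}$, one on each chain.

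Step~3 is then replaced by the hypothesis. Because $d_H(v)=k$, every $u\in N_H(v)=\{x,y_1,\dots,y_{k-1}\}$ has at least two neighbours in $W$. These vertices are all white at step $t^\ast$, and in particular $W\ne\emptyset$. We do not need $N_H(v)$ to be independent. The original argument used independence only to count neighbours in $W$, and the hypothesis gives that count directly.

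Step~4 also works. Let $w$ be the first vertex of $W$ to turn black, at step $s$. Any vertex $h\in\{x,y_1,\dots,y_{k-1}\}$ forcing in $(t^\ast,s]$ would need all but one of its neighbours black. But $h$ has two white neighbours in $W$ throughout this interval, so no such force occurs. The vertex $v$ has already forced. Hence no vertex of $H$ forces in $(t^\ast,s]$.

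Step~5 is purely about chains, so it goes through verbatim. The chain containing $w$ carries some $\tau\in\{v,y_1,\dots,y_{k-1}\}$ earlier on it. If $\tau=v$, then $x$ must force its chain-successor in $(t^\ast,s]$. Otherwise $y_i$ must force in that interval. Either way this contradicts Step~4.

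The main obstacle is checking that no other step hid a use of triangle-freeness or of $k\ge3$. One place to check is Step~4 with $k=2$: originally one needed $k-2\ge1$ extra $W$-neighbours, and now "at least two neighbours in $W$" gives exactly the one needed beyond the forced vertex. The other is the claim in Step~5 that $\sigma_1\ne x$ when $\tau=y_i$. This holds because $x$ lies on $v$'s chain, which is a chain argument and needs neither condition. I would go through each step explicitly with $k=2$ to confirm nothing else breaks.
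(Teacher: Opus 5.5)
Your proposal is correct and matches the paper's proof. Steps 0--2 are unchanged and give $d_H(v)=k$ for the first $H$-forcer $v$; the hypothesis then replaces Step~3 by giving each $u\in N_H(v)$ two white neighbours in $W=V(H)\setminus N_H[v]$, and Steps 4--5 go through verbatim, including your explicit check of the $k=2$ case that the paper's one-line proof leaves implicit.
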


\begin{proof}
This is identical to the proof of Theorem~\ref{thm:induceddk}. Steps 0--2 force the
first $H$-forcer $v$ to have $d_H(v)=k$, so the hypothesis applies to it and
keeps every potential forcer in $N_H(v)\cup\{x\}$ blocked (two white
neighbours outside $N_H[v]$) until the contradiction of Step 5.
\end{proof}

\begin{thm}\label{thm:conedk}
Let $H'$ be an induced triangle-free subgraph of $G$ with $\delta(H')=k\ge3$,
and let $A$ be a clique of $G-V(H')$, possibly empty, each vertex of which is
adjacent to every vertex of $H'$. Then
$Z(G)\ \ge\ |A|+k+1\ =\ \delta(A\vee H')+1$.
\end{thm}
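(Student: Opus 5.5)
The plan is to deduce this from Theorem~\ref{thm:master}, applied to the induced subgraph $H:=G[A\cup V(H')]$, which is the join $A\vee H'$. First I would check that $H$ really is $A\vee H'$ as an induced subgraph. $H'$ is induced, $A$ is a clique, and every vertex of $A$ is adjacent to every vertex of $H'$, so $G[A\cup V(H')]=A\vee H'$. (When $A=\emptyset$ the statement is just Theorem~\ref{thm:induceddk}, but the argument below covers that case too.)

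Next I would record a size bound. Pick any $v\in V(H')$ and $u\in N_{H'}(v)$. Since $H'$ is triangle-free, $N_{H'}(u)$ and $N_{H'}(v)$ are disjoint, and each has at least $k$ vertices, so $|V(H')|\ge 2k$. Then I would compute degrees in $H$:
\begin{itemize}
\item a vertex $h\in V(H')$ has $d_H(h)=|A|+d_{H'}(h)\ge |A|+k$, with equality exactly when $d_{H'}(h)=k$;
\item a vertex $a\in A$ has $d_H(a)=|A|-1+|V(H')|\ge |A|+2k-1>|A|+k$, because $k\ge 3$.
\end{itemize}
Hence $\delta(H)=|A|+k=:k^\ast\ge 3$, which is the identity $\delta(A\vee H')=|A|+k$ in the statement. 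Moreover, the vertices of minimum degree in $H$ are exactly the vertices $v\in V(H')$ with $d_{H'}(v)=k$.

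Now I would verify the hypothesis of Theorem~\ref{thm:master} for such a $v$. We have $N_H(v)=A\cup N_{H'}(v)$ and $V(H)\setminus N_H[v]=V(H')\setminus N_{H'}[v]$. There are two kinds of neighbour $u$ to check.
\begin{itemize}
\item If $u\in N_{H'}(v)$: $u$ has at least $k$ neighbours in $H'$, one of which is $v$. By triangle-freeness none of them lies in $N_{H'}(v)$, so $u$ has at least $k-1\ge 2$ neighbours in $V(H')\setminus N_{H'}[v]$.
\item If $u\in A$: $u$ is adjacent to all of $V(H')\setminus N_{H'}[v]$, a set of size at least $2k-k-1=k-1\ge 2$.
\end{itemize}
So the hypothesis holds, and Theorem~\ref{thm:master} gives $Z(G)\ge k^\ast+1=|A|+k+1$.

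The main obstacle is that Theorem~\ref{thm:master} is justified only by reference to the proof of Theorem~\ref{thm:induceddk}. I would therefore re-read Steps 0--5 with $k$ replaced by $k^\ast$ and check that triangle-freeness is used only in two places: the independence of $N_H(v)$ in Step 3, and the count of white neighbours in $W$. Here $N_H(v)$ is no longer independent, since $A$ is a clique joined to $N_{H'}(v)$. So the step to double-check is that Step 4 needs only the count of neighbours in $W=V(H)\setminus N_H[v]$, which is exactly what was verified above, and not independence itself. Steps 0--2 use only $\delta(H)=k^\ast$ and the chain structure, so they transfer verbatim.
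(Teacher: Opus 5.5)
Your proposal is correct and follows the paper's proof essentially step for step. You apply Theorem~\ref{thm:master} to $H=A\vee H'$, use triangle-freeness to get $|V(H')|\ge 2k$, identify the minimum-degree vertices of $H$ as the vertices of $H'$ with $d_{H'}(v)=k$, and check that each $u\in N_{H'}(v)\cup A$ has at least $k-1\ge 2$ neighbours in $W=V(H')\setminus N_{H'}[v]$. Your extra observation, that the argument behind Theorem~\ref{thm:master} needs only this neighbour count and not independence of $N_H(v)$, is sound; it is exactly what that theorem's hypothesis encodes.
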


\begin{proof}
Put $H=A\vee H'$ and $a=|A|$. Since $H'$ is triangle-free and has minimum
degree $k$, choosing an edge $uv$ of $H'$ gives
$N_{H'}(u)\cap N_{H'}(v)=\varnothing$, and hence
$|V(H')|\ge d_{H'}(u)+d_{H'}(v)\ge2k$. Therefore every vertex of $A$ has
degree
\[
|V(H')|+a-1\ge2k+a-1>a+k
\]
in $H$, whereas a vertex of degree $k$ in $H'$ has degree $a+k$ in $H$.
Thus $\delta(H)=a+k$, and every minimum-degree vertex $v$ of $H$ lies in
$H'$ and satisfies $d_{H'}(v)=k$.

Fix such a vertex $v$ and put
$W=V(H')\setminus N_{H'}[v]$. We have
$|W|\ge2k-(k+1)=k-1\ge2$. If $u\in N_{H'}(v)$, triangle-freeness implies
that the other $d_{H'}(u)-1\ge k-1\ge2$ neighbours of $u$ in $H'$ all lie
in $W$. If $u\in A$, then $u$ is adjacent to every vertex of $W$. Hence
every $u\in N_H(v)$ has at least two neighbours in
$V(H)\setminus N_H[v]=W$. Theorem~\ref{thm:master} now gives
$Z(G)\ge\delta(H)+1=a+k+1$.
\end{proof}

\section{Chromatic consequences}\label{sec:omegachiz}

Dvo\v{r}\'ak and
Kawarabayashi~\cite{dvorak2017trianglefree} proved that every triangle-free
graph of treewidth at most $t$ has chromatic number at most
$\lceil(t+3)/2\rceil$, while Barioli et al.~\cite{barioli2013parameters}
proved that $\tw(G)\le Z(G)$. Using these two results, we prove Theorem~\ref{thm:freezfn}. 

\begin{proof}[Proof of Theorem~\ref{thm:freezfn}]
Let $t=\tw(G)$. If $t=0$, then $G$ is edgeless and the result is immediate.
For $t\ge1$, the theorem of Dvo\v{r}\'ak and
Kawarabayashi~\cite{dvorak2017trianglefree}, together with
$\tw(G)\le Z(G)$~\cite{barioli2013parameters}, gives
\[
\chi(G)
 \le \left\lceil\frac{\tw(G)+3}{2}\right\rceil
 \le \left\lceil\frac{Z(G)+3}{2}\right\rceil
 = \left\lfloor\frac{Z(G)}{2}\right\rfloor+2
 \le \frac{Z(G)}{2}+2.
\]
\end{proof}

The computation in Subsection~\ref{sec:verification} supplies an independent
check of Theorem~\ref{thm:freezfn} through order $12$. For completeness, the
following elementary argument proves the near-regular case without using
treewidth.

\begin{thm}\label{thm:allregular}
If $G$ is a triangle-free graph  with
$\Delta(G)\le\delta(G)+1$, then
$\displaystyle\chi(G)\le \frac{Z(G)}2+2$.
\end{thm}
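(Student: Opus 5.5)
We will prove Theorem~\ref{thm:allregular} by combining Brooks' theorem (Theorem~\ref{thm:brooks}) with the Gentner et al.\ bound (Theorem~\ref{thm:tri-free}), without any appeal to treewidth. Both $\chi$ and the target inequality behave well under taking components. We have $\chi(G)=\max_i\chi(G_i)$ over the components $G_i$, and $Z(G)=\sum_i Z(G_i)\ge Z(G_i)$. So it suffices to find one component $G_1$ with $\chi(G_1)=\chi(G)$ and to prove $\chi(G_1)\le Z(G_1)/2+2$. This avoids passing the degree hypothesis to components, where $\delta$ could grow.

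Let $\Delta=\Delta(G_1)$ and $\delta=\delta(G_1)$. The hypothesis is a property of $G$, so we use it as follows: $\delta(G_1)\ge\delta(G)\ge\Delta(G)-1\ge\Delta-1$, hence $\delta\ge\Delta-1$.

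\textbf{Small cases.} If $G_1$ is an isolated vertex or a single edge, then $\chi\le2\le Z/2+2$. If $G_1$ is an odd cycle, then $\chi=3$ and $Z=2$, so $3\le 1+2$ and equality holds. If $G_1$ is any other cycle or a path, then $\chi\le 2$. In general, whenever $\Delta\le2$ we have $\chi\le3$. Since $G_1$ then contains a cycle, $Z\ge2$, giving $\chi\le3\le Z/2+2$. Note that $G_1$ cannot be complete on more than two vertices, since $G$ is triangle-free.

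\textbf{Main case $\Delta\ge3$.} Here $G_1$ is neither complete nor an odd cycle, so Theorem~\ref{thm:brooks} gives $\chi(G_1)\le\Delta\le\delta+1$, and $\delta\ge2$. Theorem~\ref{thm:tri-free} applied to $G_1$ gives $Z(G_1)\ge2\delta-2$, so
\[
\frac{Z(G_1)}{2}+2\ \ge\ \delta+1\ \ge\ \chi(G_1).
\]
Hence $\chi(G)=\chi(G_1)\le Z(G_1)/2+2\le Z(G)/2+2$.

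\textbf{Main obstacle.} The only delicate point is the bookkeeping in the reduction to a component. We need the degree hypothesis in the form $\delta(G_1)\ge\Delta(G_1)-1$, which follows from the global inequality as shown above. We also need $Z$ to be additive, or at least monotone, over components. Both facts are standard: a zero forcing set of $G$ restricts to a zero forcing set of each component, since forces only occur along edges within a component.
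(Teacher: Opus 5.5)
Your proof is correct and takes essentially the same route as the paper: Brooks' theorem combined with the bound $Z\ge 2\delta-2$ of Theorem~\ref{thm:tri-free}, with the low-degree case handled through odd cycles. The differences are cosmetic (you first pass to a $\chi$-attaining component and split on $\Delta(G_1)\le 2$, whereas the paper splits on $\delta(G)\le 1$), except for one wording slip: when $\Delta\le 2$, $G_1$ need not contain a cycle, and $Z\ge 2$ is only needed, and true, when $\chi(G_1)=3$, i.e.\ when $G_1$ is an odd cycle, which your explicit case list already covers.
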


\begin{proof}
First suppose $\delta(G)\le1$. Then $\Delta(G)\le2$. If $\chi(G)\le2$, the
result is immediate since every nonempty graph has $Z(G)\ge1$. If
$\chi(G)=3$, some component of $G$ is an odd cycle, so $Z(G)\ge2$ and
$\chi(G)=3\le Z(G)/2+2$.

Now suppose $\delta(G)\ge2$. By Theorem~\ref{thm:tri-free},
$Z(G)\ge2\delta(G)-2$, and hence
\[
\frac{Z(G)}2+2\ge\delta(G)+1\ge\Delta(G).
\]
If $\chi(G)\le\Delta(G)$, this proves the result. Otherwise a component
attaining $\chi(G)$ is, by Brooks' theorem, a complete graph or an odd
cycle. The complete-graph alternative is impossible for a triangle-free
graph with minimum degree at least two. In the odd-cycle alternative,
$\chi(G)=3$ and $Z(G)\ge2$, so again
$\chi(G)\le Z(G)/2+2$.
\end{proof}

An immediate consequence of Theorem~\ref{thm:allregular} is the following. 

\begin{cor}\label{cor:regular}
Every triangle-free $r$-regular graph $G$ satisfies
$\displaystyle\chi(G)\le \frac{Z(G)}2+2$.
\end{cor}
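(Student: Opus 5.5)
This is a direct specialisation of Theorem~\ref{thm:allregular}. An $r$-regular graph has $\Delta(G)=\delta(G)=r$, so in particular $\Delta(G)\le\delta(G)+1$. Since $G$ is triangle-free by hypothesis, Theorem~\ref{thm:allregular} applies and gives $\chi(G)\le Z(G)/2+2$ immediately.

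Some care is needed with degenerate cases. The case $r=0$ (an edgeless graph, possibly with no vertices) is already handled inside the proof of Theorem~\ref{thm:allregular} through the branch $\delta(G)\le1$. There, $\chi(G)\le 2$, and every nonempty graph has $Z(G)\ge1$, so the bound holds; the graph with no vertices is trivial.

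Alternatively, one could bypass Theorem~\ref{thm:allregular} and argue directly, though this only repeats its proof. For $r\ge2$, Theorem~\ref{thm:tri-free} gives $Z(G)\ge 2r-2$, so $Z(G)/2+2\ge r+1$. Since $\chi(G)\le\Delta(G)+1=r+1$ always, the claim follows without even needing Brooks' theorem. For $r\le1$, we have $\chi(G)\le2$, and $Z(G)\ge1$ whenever $G$ is nonempty.

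There is no real obstacle here. The only point to check is that regularity meets the near-regular hypothesis of Theorem~\ref{thm:allregular}, and that the small-degree cases are covered.
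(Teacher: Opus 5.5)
Your proof is correct and matches the paper, which presents the corollary as an immediate consequence of Theorem~\ref{thm:allregular} because $r$-regularity gives $\Delta(G)=\delta(G)\le\delta(G)+1$. Your alternative direct argument is also valid. It is slightly more elementary, since for regular graphs $\chi(G)\le\Delta(G)+1=\delta(G)+1\le Z(G)/2+2$ by Theorem~\ref{thm:tri-free}, so Brooks' theorem is not needed.
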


We next prove that every planar graph satisfies the former proposed
inequality.

\begin{thm}\label{thm:omegazfnplanar}
Every planar graph $G$ satisfies
\[
\chi(G)\le
\left\lceil\frac{\omega(G)+Z(G)+1}{2}\right\rceil.
\]
\end{thm}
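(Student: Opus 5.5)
We argue by cases on $\omega(G)$. For a planar graph $\omega(G)\le 4$, and by the Four Colour Theorem $\chi(G)\le 4$. Taklimi's bound (Theorem~\ref{thm:takl}) gives $Z(G)\ge\chi(G)-1$. The target inequality is equivalent to $2\chi(G)-1\le\omega(G)+Z(G)+1$, up to the ceiling. So the plan is to show in each case that $Z(G)$ is at least roughly $2\chi(G)-\omega(G)-2$.

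\emph{Case $\omega(G)\ge 3$.} It suffices to have $\omega+Z+1\ge 2\chi-1$, i.e.\ $Z\ge 2\chi-\omega-2$.
\begin{itemize}
\item If $\chi\le\omega$, the statement holds trivially, since $Z\ge\omega-1$ and hence $\lceil(\omega+Z+1)/2\rceil\ge\omega\ge\chi$.
\item Otherwise $\omega=3$ and $\chi=4$, and we need $Z\ge 3$. This follows from Theorem~\ref{thm:takl}: $Z\ge\chi-1=3$. Then $\lceil(3+3+1)/2\rceil=4$.
\end{itemize}

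\emph{Case $\omega(G)=2$.} Here $G$ is triangle-free planar, so by Gr\"otzsch's theorem (Theorem~\ref{thm:3col}) $\chi(G)\le3$.
\begin{itemize}
\item If $\chi=3$, $G$ contains an odd cycle, so it is not a forest. A graph whose zero forcing number is $1$ is a path, so $Z(G)\ge2$. This gives $\lceil(2+2+1)/2\rceil=3$.
\item If $\chi\le2$, we have $\lceil(2+Z+1)/2\rceil\ge2$ as soon as $Z\ge1$, which holds for any nonempty graph.
\end{itemize}

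\emph{Case $\omega(G)\le 1$.} The graph is edgeless, so $\chi=1$ and the bound is immediate. If $\omega=0$ the graph is null and there is nothing to prove.

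The only nontrivial ingredients are the Four Colour Theorem and Gr\"otzsch's theorem. The main point to check is the case $\omega=3,\chi=4$, which Taklimi's bound handles exactly. The path characterisation ($Z(G)=1$ iff $G$ is a path) is standard. Alternatively, in that case $G$ has a cycle, so some induced subgraph has minimum degree $2$, and \eqref{eq:taklimi-deg} gives $Z\ge2$ directly.
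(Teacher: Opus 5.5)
Your proof is correct and uses the same ingredients as the paper: the Four Colour Theorem, Taklimi's bound $Z\ge\chi-1$, and Gr\"otzsch's theorem to rule out the pair $\omega=2$, $\chi=4$. The paper cases on $\chi$ rather than on $\omega$. In the $\chi=3$ case it reads $Z\ge 2$ directly off Taklimi's bound, so your detour through the path characterisation (or degeneracy) is valid but unnecessary.
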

\begin{proof}
By the Four Colour Theorem, $\chi(G)\le4$ \cite{appel1976every}. If
$\chi(G)\le1$, the result is immediate. If $\chi(G)=2$, then $G$ has an
edge, so $\omega(G)\ge2$, while Theorem~\ref{thm:takl} gives $Z(G)\ge1$.
If $\chi(G)=3$, then again $\omega(G)\ge2$, and
Theorem~\ref{thm:takl} gives $Z(G)\ge2$. These two cases give respectively
\[
2\le\left\lceil\frac{2+1+1}{2}\right\rceil
\quad\text{and}\quad
3\le\left\lceil\frac{2+2+1}{2}\right\rceil.
\]
Finally, if $\chi(G)=4$, then $G$ is not triangle-free by
Theorem~\ref{thm:3col}; hence $\omega(G)\ge3$, and
Theorem~\ref{thm:takl} gives $Z(G)\ge3$. Therefore
\[
4=\left\lceil\frac{3+3+1}{2}\right\rceil
\le\left\lceil\frac{\omega(G)+Z(G)+1}{2}\right\rceil.
\]
\end{proof}

\subsection{Small-order computation and a minimum counterexample}
\label{sec:verification}

Using \texttt{geng} (nauty) \cite{mckay2014practical} to enumerate connected graphs up to isomorphism and
computing $\chi(G)$, $\omega(G)$ and $Z(G)$ exactly for a graph $G$ ($Z(G)$ by exhausting initial sets with
closure evaluation; the engine reproduces the known values for $C_5$, $C_7$,
$K_5$, $K_{3,3}$, the $5$-wheel, the Petersen graph, and the Gr\"otzsch graph),
we obtained the following exact finite results. In the
first version these computations were presented as evidence for
\eqref{eq:former-bound}; here we retain them as a small-order census and use
them to establish the minimum order of a counterexample. The order-$11$
calculation was rerun in $256$ shards; the source code, per-shard summaries,
and SHA-$256$ checksums are retained with the ancillary computational files.

\begin{itemize}[topsep=2pt,itemsep=2pt]
\item Inequality~\eqref{eq:former-bound} holds for all
$1{,}018{,}690{,}329$ connected graphs with at most $11$ vertices
($1{,}006{,}700{,}565$ of them on exactly $11$ vertices). No violations.
\item As an independent check of Theorem~\ref{thm:freezfn}, the inequality
holds for all $1{,}144{,}061$ connected triangle-free graphs on $12$ vertices.
Exactly $23$ of these graphs are $4$-chromatic: $16$ have $Z=5$ and $7$ have
$Z=6$. Together with the census through $11$ vertices, this computationally
confirms the theorem through order $12$.
\end{itemize}

\begin{prop}\label{prop:mincounterexample}
Inequality~\eqref{eq:former-bound} holds for every graph on at most $11$
vertices and fails for a graph on $12$ vertices. Consequently, the minimum
order of a counterexample is $12$.
\end{prop}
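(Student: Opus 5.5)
The statement has two halves, and I will prove them separately. The first half (no counterexample on at most $11$ vertices) rests on the exhaustive census just described. The second half (failure on $12$ vertices) needs an explicit graph on $12$ vertices whose three invariants I certify.

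\textbf{No counterexample on at most $11$ vertices.} I first reduce to connected graphs. Suppose $G$ has components $G_1,\dots,G_m$. Then $\chi(G)=\max_i\chi(G_i)$ and $\omega(G)=\max_i\omega(G_i)$, while $Z(G)=\sum_i Z(G_i)\ge Z(G_j)$ for every $j$, because a zero forcing set of $G$ restricts to one on each component. Choose $j$ with $\chi(G_j)=\chi(G)$. Then
\[
\chi(G)=\chi(G_j)\le\left\lceil\frac{\omega(G_j)+Z(G_j)+1}{2}\right\rceil\le\left\lceil\frac{\omega(G)+Z(G)+1}{2}\right\rceil ,
\]
since the right-hand side is monotone in $\omega$ and $Z$. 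So a minimum counterexample is connected, and the census of all connected graphs through $11$ vertices (the first bullet above) gives the upper half of the statement.

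\textbf{A counterexample on $12$ vertices.} I exhibit the $12$-vertex graph $G_0$ found by the targeted search, give its edge list, and certify $\chi(G_0)$, $\omega(G_0)$ and $Z(G_0)$ so that \eqref{eq:former-bound} fails. The counterexample cannot be triangle-free: if $\omega=2$, then \eqref{eq:former-bound} reads $\chi\le\lceil (Z+3)/2\rceil$, which is exactly the bound proved in Theorem~\ref{thm:freezfn}. So necessarily $\omega(G_0)\ge3$, and the natural target is something like $\omega=3$, $\chi=5$, $Z\le 5$, i.e.\ $\lceil(\omega+Z+1)/2\rceil\le4<\chi$. The certification has three parts.
\begin{itemize}
\item \emph{Clique number.} Exhibit a triangle and check that no $K_4$ exists, by a direct case check over the edges.
\item \emph{Zero forcing number.} Display an explicit zero forcing set of the required size together with its sequence of forces, which gives the upper bound on $Z$.
\item \emph{Chromatic number.} Prove that $G_0$ is not $(\chi-1)$-colourable. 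I would try either a short case analysis exploiting a Mycielski- or Hajós-type substructure, or a reference to the exhaustive computation.
\end{itemize}
The counterexample only needs an upper bound on $Z$ and a lower bound on $\chi$, so no matching lower bound for $Z$ is required.

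\textbf{Main obstacle.} The hard step is the human-verifiable lower bound on $\chi(G_0)$. Non-colourability arguments for $12$-vertex graphs can branch heavily. I would first look for a vertex $v$ such that $G_0-v$ has a rigid structure, for example every $(\chi-2)$-colouring of some subgraph forces equal colours on specific pairs. If that fails, I would fall back on the computer certificate, with its checksums, as the paper already does for the census.
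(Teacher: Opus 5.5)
Your first half is correct and is the paper's argument: reduce to a connected component $G_j$ with $\chi(G_j)=\chi(G)$, use monotonicity of the right-hand side of \eqref{eq:former-bound} in $\omega$ and $Z$, and invoke the census of connected graphs through $11$ vertices.

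\textbf{The gap is in the second half.} The claim that \eqref{eq:former-bound} fails on $12$ vertices is an existence statement, and its whole content is an explicit graph with checkable certificates. You announce that you will ``exhibit the $12$-vertex graph $G_0$ found by the targeted search'' and describe how you would certify it, but you never specify any graph, so that half is not proved. The paper supplies the graph: it has graph6 code \texttt{K\textasciitilde nXzlFWxDkF} and $40$ edges, with the following certificates.
\begin{itemize}
\item Clique number: $\{0,1,2,3\}$ is a $K_4$, and exact clique search rules out $K_5$.
\item Zero forcing number: $\{0,1,2,3,4\}$ is a zero forcing set with an explicit forcing sequence, so $Z\le 5$; since $\delta=5$, \eqref{eq:taklimi-deg} gives $Z=5$.
\item Chromatic number: exact backtracking shows there is no proper $5$-colouring.
\end{itemize}
Hence $\lceil(4+5+1)/2\rceil=5<6=\chi$. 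You are right that only upper bounds on $\omega$ and $Z$ and a lower bound on $\chi$ are needed. You are also right that the lower bound on $\chi$ may rest on computation, as the paper's does.

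\textbf{Your proposed search target would not work.} With $(\omega,\chi,Z)=(3,5,5)$ one gets $\lceil 9/2\rceil=5=\chi$, which is no violation. In general, failure of \eqref{eq:former-bound} means $2\chi\ge\omega+Z+3$. Combined with $Z\ge\chi-1$ from Theorem~\ref{thm:takl}, this forces $\chi\ge\omega+2$ and pins down $Z$. For $\omega=3,\chi=5$ you would need $Z=4$ exactly. For $\omega=4,\chi=6$ you need $Z=5$ exactly, which is the regime of the paper's example. Your remark that the counterexample cannot be triangle-free is correct, but it does not replace actually producing the graph.
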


\begin{proof}
The exhaustive census above proves the assertion through order $11$ for
connected graphs. This is sufficient: if a disconnected graph $G$ violated
\eqref{eq:former-bound}, a component $C$ with $\chi(C)=\chi(G)$ would satisfy
$\omega(C)\le\omega(G)$ and $Z(C)\le Z(G)$ and would itself be a
counterexample.

For order $12$, consider the graph with graph6 representation
\verb|K~nXzlFWxDkF|. It has $40$ edges and
\[
(\chi(G),\omega(G),Z(G))=(6,4,5).
\]
For directly checkable lower and upper certificates, the vertices
$\{0,1,2,3\}$ form a $K_4$, and
\[
(4,2,1,3,5,1,2,4,3,5,4,6)
\]
is a proper $6$-colouring of vertices $0,\ldots,11$. Exact backtracking finds
no proper $5$-colouring and exact clique search finds no $K_5$. An
independent computation with nauty confirms $\chi(G)=6$ and
$\omega(G)=4$.

The set $\{0,1,2,3,4\}$ is a zero forcing set, with forcing sequence
\[
0\to5,\quad 4\to6,\quad 3\to7,\quad 5\to8,\quad
7\to9,\quad 1\to11,\quad 2\to10.
\]
Since $\delta(G)=5$, \eqref{eq:taklimi-deg} gives $Z(G)\ge5$, so the displayed
sequence proves $Z(G)=5$. Therefore
\[
2\chi(G)=12>11=\omega(G)+Z(G)+2,
\]
which is equivalent to the failure of \eqref{eq:former-bound}.
\end{proof}

\begin{figure}[htbp]
\centering
\includegraphics[width=.88\linewidth]{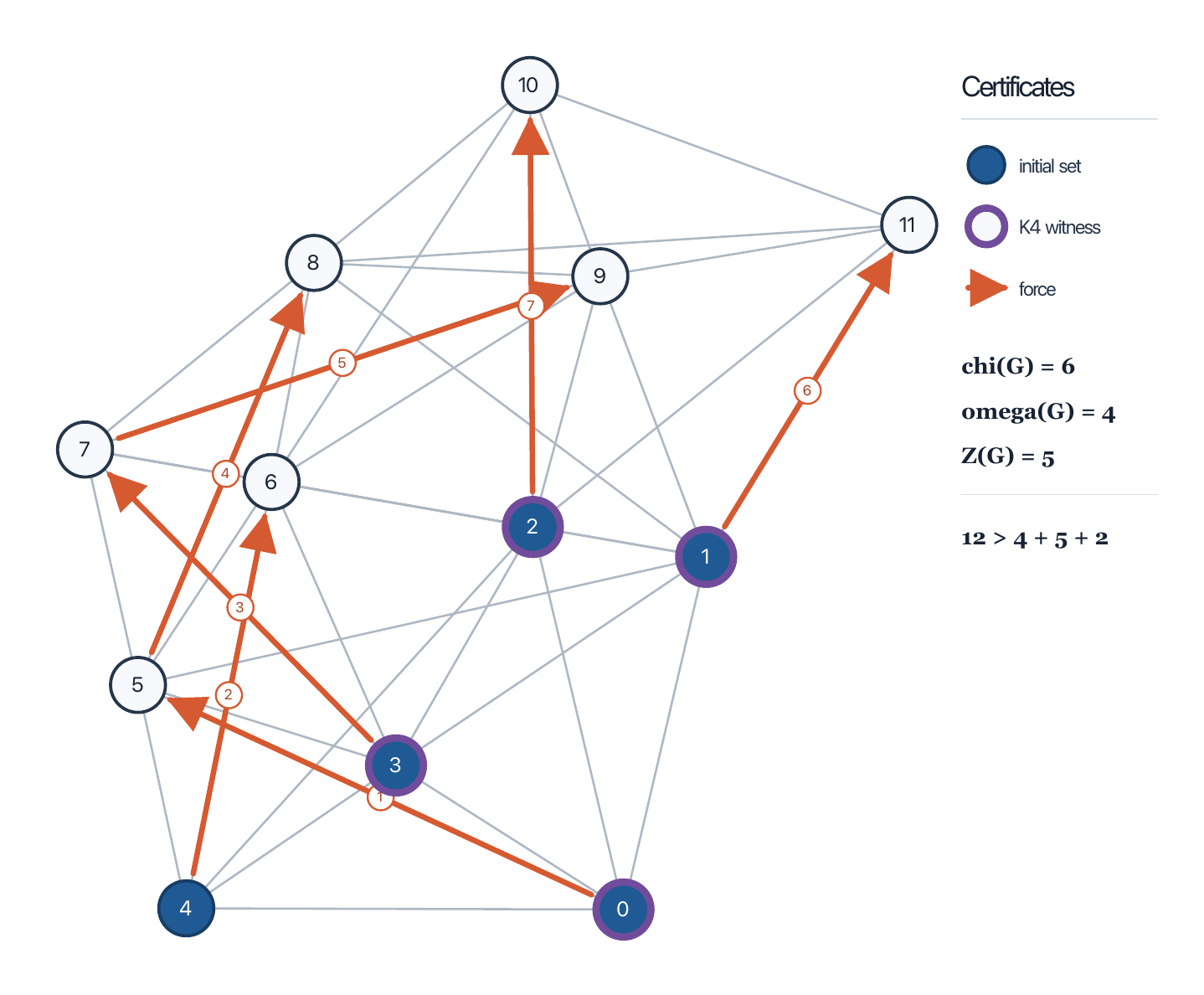}
\caption{A minimum-order counterexample to \eqref{eq:former-bound}. Blue
vertices form an initial zero forcing set, purple double rings mark a
$K_4$, and the numbered arrows give a forcing sequence.}
\label{fig:order12-counterexample}
\end{figure}

\section{Concluding remarks}
The counterexample to \eqref{eq:former-bound} shows that clique number and
zero forcing number alone do not control chromatic number through the
proposed averaging formula. The induced-subgraph results proved here remain
valid and suggest a different direction: identify structural hypotheses on
an induced subgraph $H$ that force $Z(G)$ to exceed $\delta(H)$.
Theorem~\ref{thm:induceddk} resolves this when $H$ is triangle-free with
minimum degree at least three, and Theorem~\ref{thm:master} isolates a more
general local criterion.

Theorem~\ref{thm:freezfn} settles the triangle-free inequality for all
triangle-free graphs by combining known treewidth results, so the
near-regularity condition in Theorem~\ref{thm:allregular} is not needed for
that conclusion. A natural problem remains to determine other induced graph
classes for which the degeneracy lower bound on $Z(G)$ can be improved. This
preserves the connection between zero forcing and graph colouring while
respecting the obstruction exhibited by the minimum-order counterexample in
Proposition~\ref{prop:mincounterexample}. It would also be interesting to
characterise the counterexamples of minimum order.

\paragraph{Acknowledgements.}
We thank Digvijay Bokey for communicating an earlier $13$-vertex counterexample,
which prompted the targeted search reported here.

\bibliographystyle{plain}
\bibliography{references}

\end{document}